\documentclass[12pt]{amsart}
\usepackage[latin1]{inputenc}
\usepackage{mathptmx}
\usepackage{amscd}
\usepackage{color}

\textwidth=14cm \textheight=21.8cm \topmargin=0.5cm
\oddsidemargin=0.5cm \evensidemargin=0.5cm

\newtheorem{theorem}{Theorem}[section]

\newtheorem{corollary}[theorem]{Corollary}
\newtheorem{proposition}[theorem]{Proposition}

\theoremstyle{definition}

\let\frak=\mathfrak
\let\Bbb=\mathbb
\let\phi=\varphi

\def\Hom{\operatorname{Hom}}
\def\Ext{\operatorname{Ext}}

\def\Spec{\operatorname{Spec}}

\def\Ker{\operatorname{Ker}}

\def\Im{\operatorname{Im}}

\let\oldbigwedge\bigwedge
\def\BIGwedge{{\textstyle\oldbigwedge}}
\def\medwedge{{\scriptstyle\oldbigwedge}}
\def\bigwedge{\mathchoice{\BIGwedge}{\BIGwedge}{\medwedge}{}}

 \DeclareMathOperator{\Supp}{Supp}

\let\epsilon=\varepsilon
\let\tilde=\widetilde

\begin{document}

\title[Bass numbers of local cohomology modules...]{Bass numbers of local cohomology modules with respect a pair of ideals}
\author{  Sh. Payrovi }
\author{ M.  Lotfi Parsa }
\author{ S.  Babaei }
\address{I. K. International University,
 Postal Code: 34149-1-6818
Qazvin - IRAN } \email{shpayrovi@ikiu.ac.ir} \email{
lotfi.parsa@yahoo.com}\email{
 sakine-babaei@yahoo.com}

\begin{abstract}
Let $R$ be a  Noetherian local ring, $I$ and $J$ two ideals of $R$, $M$
an $R$-module and  $s$ and $t$ two integers. We
study the relationship between the Bass numbers of $M$ and
$H^{i}_{I,J}(M)$. We show that
 $\mu^t(M)\leq\sum_{i=0}^{t}\mu^{t-i}(
H^{i}_{I,J}(M))$ and  $\mu^s(H^{t}_{I,J}(M))\leq
\sum_{i=0}^{t-1}\mu^{s+t+1-i}(H^{i}_{I,J}(M))+\mu^{s+t}(M)+\sum_{i=t+1}^{s+t-1}\mu^{s+t-1-i}(H^{i}_{I,J}(M))$.
 As a consequence, it follows  that
if $I$ is a principal ideal of $R$ and $M$ is a minimax $R$-module,
then  $\mu^j(H^{i}_{I,J}(M))$
is finite for all $i\in\Bbb N_0$ and all $j\in\Bbb N_0$.
\end{abstract}

{\footnote{$\hspace*{-5mm}$ 2010 Mathematics Subject
Classification. 13D45, 13D07. \\Key words and Phrases. Bass
Number,  Extension functor, Local cohomology.}
\vspace{-0.5cm} \maketitle
\section{Introduction }

Throughout this paper, $R$ is a commutative Noetherian ring with
non-zero identity, $I$ and $J$ are two ideals of $R$, $M$ is an
$R$-module and $s$ and $t$ are two integers. For notations and
terminologies not given in this paper, the reader is referred to
\cite{bs}, \cite{bh} and \cite{ty} if necessary.

The theory of local cohomology, which was introduced by Grothendieck \cite{g}, is a useful
tool for attacking problems in commutative algebra and algebraic geometry.
Bijan-Zadeh  \cite{b} introduced  the local cohomology
modules with respect to a system of ideals, which is a generalization of ordinary local cohomology modules.
As a special case of these extend modules,
Takahashi, Yoshino and Yoshizawa \cite{ty} defined the local cohomology
modules with respect to a pair of ideals. To be
more precise, let ${\rm W}(I, J)=\{\frak p\in \Spec(R):
I^t\subseteq J+\frak p ~~~~\text{for some positive
integer}~~~~t\}$. The set of elements $x$ of $M$ such that
$\Supp_RRx\subseteq {\rm W}(I, J)$, is said to be $(I, J)$-torsion
submodule of $M$ and is denoted by $\Gamma_{I,J}(M)$.
 $\Gamma_{I,J}(-)$ is a covariant,
$R$-linear functor from the category of $R$-modules to itself. For
an integer $i$, the local cohomology functor $H^{i}_{I,J}(-)$ with
respect to $(I, J)$, is defined to be the $i$-th right derived
functor of $\Gamma_{I,J}(-)$. Also $H^{i}_{I,J}(M)$ is called the
$i$-th local cohomology module of $M$ with respect to $(I, J)$. If
$J=0$, then $H^{i}_{I,J}(-)$ coincides with the ordinary local
cohomology functor $H^{i}_{I}(-)$.  Let
$\tilde{{\rm W}}(I, J)=\{\frak a\unlhd R: I^t\subseteq J+\frak
a~~~~\text{for some positive integer} ~~~~t\}$.
 It is easy to see that
$$\Gamma_{I,J}(M)=\{x\in M: \exists\ \frak a\in {\rm \tilde W}(I,J),\ \frak ax=0\}
=\bigcup_{\frak a\in {\rm \tilde W}(I,J)}(0:_{M}\frak a).$$


An important problem in commutative Algebra is to determine when
the Bass numbers of the $i$-th local cohomology module is finite. In
\cite{h} Huneke conjectured that if $(R,\frak m, k)$ is a regular
local ring, then for any prime ideal $\frak p$ of $R$ the Bass
numbers $\mu^j(\frak p, H^{i}_{I}(R)) = \dim_{k(p)}
\Ext^j_{R_{\frak p}} (k(\frak p), H^{i}_{IR_{\frak p}}(R_{\frak
p}))$ are finite for all $i\in\Bbb N_0$ and all $j\in\Bbb N_0$.
There are some evidences
that this conjecture is true;  see \cite{hs}, \cite{l1} and
\cite{l2}. On the other hand, there is a negative answer to the conjecture
(over a non-regular ring) that is due to Hartshorne, see
\cite{ha}.
However the conjecture does not hold over a non-regular ring,
Kawasaki \cite{k} proved that if  $R$ is a local ring, $I$ is a principal ideal of $R$, and
$M$ is a finitely generated $R$-module,
then  $\mu^j(\frak p, H^{i}_{I}(M))$
is finite for all $\frak p\in\Spec(R)$, all $i\in\Bbb N_0$ and all $j\in\Bbb N_0$.

 Dibaei and Yassemi \cite{dy} studied the
relationship between the Bass numbers of an $R$-module and those of its local
cohomology modules. They show that if $R$ is a local
ring, then
 $$\mu^t(M)\leq\sum_{i=0}^{t}\mu^{t-i}(
H^{i}_{I}(M))$$ and  $$\mu^s(H^{t}_{I}(M))\leq
\sum_{i=0}^{t-1}\mu^{s+t+1-i}(H^{i}_{I}(M))+\mu^{s+t}(M)+\sum_{i=t+1}^{s+t-1}\mu^{s+t-1-i}(H^{i}_{I}(M)).$$
In \ref{13}, by a different method, we generalize this result for local cohomology modules
with respect to a pair of ideals. As a consequence,  it follows  that
if $R$ is a local ring and $I$ is a principal ideal of $R$, then  $\mu^j(H^{i}_{I,J}(M))$
is finite for all $i\in\Bbb N_0$ and all $j\in\Bbb N_0$, where
$M$ is a minimax $R$-module; see \ref{123}.

In  section 3, we get some isomorphisms  about the
extension functors  of local cohomology modules, which imply
 some equalities about the
 Bass numbers of local cohomology modules.
\section{Bass numbers }
Recall that $R$ is a Noetherian ring, $I$ and $J$ are ideals of
$R$ and $M$ is an $R$-module.
 The following theorem is the main
result of this paper.

\begin{theorem}\label{11}
Let $N$ be an  $(I,J)$-torsion $R$-module. Then
\begin{itemize}
\item[(i)] $$\dim_R \Ext_R^t(N, M)\leq\sum_{i=0}^{t}\dim_R
\Ext_R^{t-i}(N, H^{i}_{I,J}(M)).$$

\item[(ii)]
\begin{eqnarray*}
\dim_R \Ext_R^{s}(N, H^{t}_{I,J}(M))&\leq& \sum_{i=0}^{t-1}\dim_R
\Ext_R^{s+t+1-i}(N, H^{i}_{I,J}(M))\\&+&\dim_R \Ext_R^{s+t}(N,
M)\\&+&\sum_{i=t+1}^{s+t-1}\dim_R \Ext_R^{s+t-1-i}(N,
H^{i}_{I,J}(M)).
\end{eqnarray*}
\end{itemize}

\end{theorem}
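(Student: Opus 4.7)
The plan is to invoke the Grothendieck spectral sequence of the composition $\Hom_R(N,-)\circ\Gamma_{I,J}(-)$. Since $N$ is $(I,J)$-torsion, every $R$-homomorphism $f\colon N\to M$ sends $x\in N$, annihilated by some $\frak a\in\tilde W(I,J)$, into $(0:_M\frak a)\subseteq\Gamma_{I,J}(M)$, whence $\Hom_R(N,M)=\Hom_R(N,\Gamma_{I,J}(M))$. Combined with the fact, available from \cite{ty}, that $\Gamma_{I,J}$ preserves injectivity, the composition theorem yields a convergent first-quadrant spectral sequence
\[
E_2^{p,q}=\Ext_R^p\bigl(N,H^q_{I,J}(M)\bigr)\Longrightarrow \Ext_R^{p+q}(N,M).
\]

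For (i), the abutment carries a finite decreasing filtration with successive quotients $E_\infty^{p,t-p}$, $p=0,\dots,t$. Using the identity $\Supp B=\Supp A\cup\Supp C$ for any short exact sequence $0\to A\to B\to C\to 0$, a straightforward induction along the filtration gives $\dim_R\Ext_R^t(N,M)\le\sum_{p=0}^t\dim_R E_\infty^{p,t-p}$. Since each $E_\infty^{p,q}$ is a subquotient of $E_2^{p,q}$, inequality (i) follows after re-indexing by $i=t-p$.

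For (ii), I would bound $\dim_R E_2^{s,t}$ by unwinding the spectral sequence backwards. On page $r$ the two short exact sequences
\[
0\to \Ker d_r^{s,t}\to E_r^{s,t}\to \Im d_r^{s,t}\to 0,\qquad 0\to \Im d_r^{s-r,t+r-1}\to \Ker d_r^{s,t}\to E_{r+1}^{s,t}\to 0,
\]
combined with the facts that $\Im d_r^{s,t}$ is a submodule of $E_r^{s+r,t-r+1}$, that $\Im d_r^{s-r,t+r-1}$ is a quotient of $E_r^{s-r,t+r-1}$, and that every $E_r^{p,q}$ is a subquotient of $E_2^{p,q}$, give
\[
\dim_R E_r^{s,t}\le \dim_R E_{r+1}^{s,t}+\dim_R E_2^{s+r,t-r+1}+\dim_R E_2^{s-r,t+r-1}.
\]
Iterating from $r=2$ until $E_r^{s,t}=E_\infty^{s,t}$, using $E_2^{p,q}=0$ for $p<0$ or $q<0$ to restrict the range (to $r=2,\dots,t+1$ for the middle term and $r=2,\dots,s$ for the last), and then bounding $\dim_R E_\infty^{s,t}\le\dim_R\Ext_R^{s+t}(N,M)$, yields (ii) after re-indexing: via $i=t-r+1$ the middle terms become $\dim_R\Ext_R^{s+t+1-i}(N,H^i_{I,J}(M))$ for $i=0,\dots,t-1$, and via $i=t+r-1$ the last terms become $\dim_R\Ext_R^{s+t-1-i}(N,H^i_{I,J}(M))$ for $i=t+1,\dots,s+t-1$.

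The main obstacle I anticipate is the careful index-bookkeeping in the iteration for (ii): one must match the ranges and exponents produced by the iterative bound exactly with the two sums in the statement, and verify the vanishing shape of the first quadrant terminates the iteration at the right place. The spectral-sequence input itself is standard once the acyclicity of $\Gamma_{I,J}$ on injectives is in hand.
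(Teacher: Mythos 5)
Your proposal is correct and follows essentially the same route as the paper's proof: the Grothendieck spectral sequence for $\Hom_R(N,-)\circ\Gamma_{I,J}(-)$, the abutment filtration for part (i), and the same iterative unwinding of the page-$r$ short exact sequences for part (ii), with the same re-indexing by $i=t-r+1$ and $i=t+r-1$ recovering the two sums. The index bookkeeping you flag as the anticipated obstacle does work out exactly as you describe.
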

\begin{proof}
Let $F(-)=\Hom_R(N,-)$ and $G(-)=\Gamma_{I,J}(-)$. Then we have
$FG(M)= \Hom_R(N, M)$. By \cite[Theorem 11.38]{r}, there is
the Grothendieck spectral sequence
$$E_2^{p,q}:=\Ext_R^{p}(N, H^{q}_{I,J}(M))\Rightarrow \Ext_R^{p+q}(N,
M).$$  There is a finite filtration
$$0=\phi^{p+q+1}H^{p+q}\subseteq\phi^{p+q}H^{p+q}\subseteq\cdots
\subseteq\phi^{1}H^{p+q}\subseteq\phi^{0}H^{p+q}=\Ext_R^{p+q}(N,M)$$
such that $E_\infty^{p+q-i,i}\cong
\phi^{p+q-i}H^{p+q}/\phi^{p+q+1-i}H^{p+q}$ for all $i\leq p+q$.

(i) We have to show that $\dim_R
\phi^{0}H^t\leq\sum_{i=0}^{t}\dim_R E_2^{t-i,i}$.  The
sequence
$$0\longrightarrow\phi^{t+1-i}H^t\longrightarrow\phi^{t-i}H^t\longrightarrow
E_\infty^{t-i,i}\longrightarrow 0$$ is exact for all $i\leq t$. It follows that
\begin{eqnarray*}
\dim_R \phi^{0}H^t&\leq&\dim_R \phi^{1}H^t+\dim_R E_\infty^{0,t}
\\&\leq&\dim_R \phi^{2}H^t+\dim_R E_\infty^{1,t-1}+\dim_R E_\infty^{0,t}
\\&\leq&\cdots\\&\leq&\sum_{i=0}^{t}\dim_R
E_\infty^{t-i,i}.
\end{eqnarray*}
Since $E_\infty^{t-i,i}$ is a subquotient of $E_2^{t-i,i}$ for all
$i\leq t$, thus $\dim_R E_\infty^{t-i,i}\leq \dim_R
E_2^{t-i,i}$ and  the claim holds.

(ii) We have to show that
$$\dim_R E_{2}^{s,t}\leq \sum_{i=0}^{t-1}\dim_R
E_{2}^{s+t+1-i,i}+\dim_R
\phi^{0}H^{s+t}+\sum_{i=t+1}^{s+t-1}\dim_R E_{2}^{s+t-1-i,i}.$$
The  sequences
$$0\longrightarrow \Ker d_{t+1-i}^{s,t}\longrightarrow E_{t+1-i}^{s,t}\stackrel{d_{t+1-i}^{s,t}}\longrightarrow
E_{t+1-i}^{s+t+1-i,i}$$ and
$$0\longrightarrow \Im d_{t+1-i}^{s-t-1+i,2t-i}\longrightarrow
\Ker d_{t+1-i}^{s,t}\longrightarrow
E_{t+2-i}^{s,t}\longrightarrow0$$ are exact
for any integer $i$. It follows that
\begin{eqnarray*}
\dim_R E_{2}^{s,t}&\leq&\dim_R E_{2}^{s+2,t-1}+\dim_R \Ker
d_{2}^{s,t}
\\&\leq&\dim_R E_{2}^{s+2,t-1}+\dim_R
E_{3}^{s,t}+\dim_R \Im d_{2}^{s-2,t+1}
\\&\leq&\dim_R E_{2}^{s+2,t-1}+\dim_R
E_{3}^{s+3,t-2}+\dim_R \Ker d_{3}^{s,t}+\dim_R \Im
d_{2}^{s-2,t+1}\\&\leq&\dim_R E_{2}^{s+2,t-1}+\dim_R
E_{3}^{s+3,t-2}+\dim_R E_{4}^{s,t}+\dim_R \Im
d_{3}^{s-3,t+2}\\&+&\dim_R \Im d_{2}^{s-2,t+1}
\\&\leq&\cdots\\&\leq&\sum_{i=0}^{t-1}\dim_R E_{t+1-i}^{s+t+1-i,i}+\dim_R
E_{s+t+2}^{s,t}+\sum_{i=t+1}^{s+t-1}\dim_R \Im
d_{1-t+i}^{s+t-1-i,i}.
\end{eqnarray*}
Since $E_{t+1-i}^{s+t+1-i,i}$ is a subquotient of
$E_{2}^{s+t+1-i,i}$, $E_{s+t+2}^{s,t}=E_\infty^{s,t}$ is a
subquotient of $\phi^{0}H^{s+t}$, and $\Im d_{1-t+i}^{s+t-1-i,i}$
is a subquotient of $E_{2}^{s+t-1-i,i}$, the claim
follows.
 \end{proof}

\begin{corollary}\label{12}
Let $N$ be an  $(I,J)$-torsion $R$-module.
Let $\Ext_R^{s+t-i}(N,
H^{i}_{I,J}(M))=0$ for  all $i\neq t$ with $i\leq s+t$,
$\Ext_R^{s+t+1-i}(N, H^{i}_{I,J}(M))=0$ for all $i<t$, and let
$\Ext_R^{s+t-1-i}(N, H^{i}_{I,J}(M))=0$ for all $t<i<s+t$. Then
$$\dim_R\Ext_R^{s}(N, H^{t}_{I,J}(M))=\dim_R\Ext_R^{s+t}(N, M).$$
\end{corollary}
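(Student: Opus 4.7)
The plan is to derive the equality as a pair of opposite inequalities, obtained by applying each part of Theorem \ref{11} once and then killing every term the vanishing hypotheses allow us to kill.

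First I would prove $\dim_R\Ext_R^{s+t}(N,M)\leq\dim_R\Ext_R^{s}(N,H^{t}_{I,J}(M))$ by invoking Theorem \ref{11}(i) with the index $t$ replaced by $s+t$. This gives
\begin{equation*}
\dim_R \Ext_R^{s+t}(N,M)\leq\sum_{i=0}^{s+t}\dim_R \Ext_R^{s+t-i}(N,H^{i}_{I,J}(M)).
\end{equation*}
The first vanishing hypothesis, $\Ext_R^{s+t-i}(N,H^{i}_{I,J}(M))=0$ for every $i\leq s+t$ with $i\neq t$, kills every summand except the one indexed by $i=t$, which is exactly $\dim_R\Ext_R^{s}(N,H^{t}_{I,J}(M))$.

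Next I would prove the reverse inequality by applying Theorem \ref{11}(ii) directly:
\begin{eqnarray*}
\dim_R \Ext_R^{s}(N,H^{t}_{I,J}(M))&\leq& \sum_{i=0}^{t-1}\dim_R \Ext_R^{s+t+1-i}(N,H^{i}_{I,J}(M))\\
&+&\dim_R \Ext_R^{s+t}(N,M)\\
&+&\sum_{i=t+1}^{s+t-1}\dim_R \Ext_R^{s+t-1-i}(N,H^{i}_{I,J}(M)).
\end{eqnarray*}
The second vanishing hypothesis annihilates the first sum and the third vanishing hypothesis annihilates the last sum, leaving only the middle term $\dim_R\Ext_R^{s+t}(N,M)$.

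The two inequalities together yield the desired equality, so there is really no obstacle: the hypotheses have been engineered precisely to make one term survive on each side of each bound from Theorem \ref{11}. The only thing to verify carefully is that the index ranges in the hypotheses match the ranges appearing in the two summations of Theorem \ref{11}(ii) and in the single summation of Theorem \ref{11}(i) with $t$ replaced by $s+t$; in particular one should note that the $i=t$ term of the sum in part (i), being the Ext with degree $s$, is not required to vanish by the first hypothesis, which is exactly why it is the one term that survives.
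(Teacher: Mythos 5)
Your proof is correct and is exactly the argument the paper has in mind: Corollary \ref{12} is stated without proof immediately after Theorem \ref{11}, and the intended derivation is precisely to apply part (i) with $t$ shifted to $s+t$ (so that hypothesis one kills all summands except $i=t$) and part (ii) as stated (so that hypotheses two and three kill both sums), then combine the two inequalities. Your care about the index ranges, and the observation that the $i=t$ term of part (i) is deliberately exempted from the first vanishing hypothesis, is exactly the right sanity check.
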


 \begin{corollary}\label{15}
Suppose that $N$ is a finitely generated $\frak{a}$-torsion
$R$-module for some $\frak{a}\in \tilde{{\rm W}}(I,J)$. Then
 \begin{itemize}
\item[(i)] $$\dim_R H_\frak{a}^t(N, M)\leq\sum_{i=0}^{t}\dim_R
\Ext_R^{t-i}(N, H^{i}_{I,J}(M)).$$

\item[(ii)]
 \begin{eqnarray*} \dim_R \Ext_R^{s}(N,
H^{t}_{I,J}(M))&\leq& \sum_{i=0}^{t-1}\dim_R \Ext_R^{s+t+1-i}(N,
H^{i}_{I,J}(M))\\&+&\dim_R
H^{s+t}_{\frak{a}}(N,M)\\&+&\sum_{i=t+1}^{s+t-1}\dim_R
\Ext_R^{s+t-1-i}(N, H^{i}_{I,J}(M)).
\end{eqnarray*}
\end{itemize}
\end{corollary}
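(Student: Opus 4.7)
The plan is to deduce Corollary \ref{15} from Theorem \ref{11} by invoking the standard identification of Herzog's generalized local cohomology $H^i_{\frak{a}}(N,M):=\varinjlim_{n}\Ext_R^i(N/\frak{a}^n N,M)$ with ordinary Ext under the hypotheses at hand. Observe that the only difference between Theorem \ref{11}(i) and Corollary \ref{15}(i), respectively between Theorem \ref{11}(ii) and Corollary \ref{15}(ii), is that $\Ext_R^{t}(N,M)$ has been replaced by $H_\frak{a}^{t}(N,M)$ and $\Ext_R^{s+t}(N,M)$ by $H^{s+t}_\frak{a}(N,M)$; all the other summands on the right-hand sides are identical. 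Consequently it suffices to check two things: that Theorem \ref{11} is applicable to $N$, and that $\Ext_R^{i}(N,M)\cong H^{i}_\frak{a}(N,M)$ for every $i\geq 0$.

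For the first item, I would argue as follows. Since $N$ is finitely generated and $\frak{a}$-torsion, Noetherianness provides an integer $n_0$ with $\frak{a}^{n_0}N=0$. The assumption $\frak{a}\in\tilde{{\rm W}}(I,J)$ gives $I^t\subseteq J+\frak{a}$ for some $t$, so by binomial expansion $I^{tn_0}\subseteq (J+\frak{a})^{n_0}\subseteq J+\frak{a}^{n_0}$, whence $\frak{a}^{n_0}\in\tilde{{\rm W}}(I,J)$. Since $\tilde{{\rm W}}(I,J)$ is closed under taking supersets of ideals and $\Ann_R(x)\supseteq\frak{a}^{n_0}$ for every $x\in N$, each such $x$ lies in $\Gamma_{I,J}(N)$. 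Thus $N$ is $(I,J)$-torsion and Theorem \ref{11} applies.

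For the second item, the canonical surjections $N/\frak{a}^{n+1}N\twoheadrightarrow N/\frak{a}^n N$ providing the transition maps in the direct system become the identity $N\to N$ for all $n\geq n_0$. Hence the induced maps on $\Ext_R^{i}(-,M)$ are identities from that index onward, and the direct limit collapses to $\Ext_R^{i}(N,M)$. Substituting this isomorphism into the two inequalities of Theorem \ref{11} yields parts (i) and (ii) of Corollary \ref{15} verbatim.

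The proof is essentially a translation rather than a fresh argument; the only piece of genuine content is the direct-limit collapse, and I would not expect a serious obstacle beyond the bookkeeping check that $\frak{a}^{n_0}$ remains in $\tilde{{\rm W}}(I,J)$ whenever $\frak{a}$ does, which is immediate from the estimate above.
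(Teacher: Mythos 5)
Your proposal is correct and follows essentially the same route as the paper: deduce the statement from Theorem \ref{11} after checking that $N$ is $(I,J)$-torsion and that $\Ext_R^i(N,M)\cong H^i_{\frak a}(N,M)$. The paper gets the first point more directly from $\Gamma_{\frak a}(N)\subseteq\Gamma_{I,J}(N)$ (since $\frak a\in\tilde{\rm W}(I,J)$ forces ${\rm V}(\frak a)\subseteq{\rm W}(I,J)$) and cites a lemma for the second, whereas you supply the direct-limit-collapse argument yourself, but these are the same observations.
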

\begin{proof}
Note that $\Gamma_\frak{a}(N)\subseteq \Gamma_{I,J}(N)$, and
by \cite[Lemma 2.1]{dst} we have $\Ext_R^{i}(N, M)\cong
H^i_\frak{a}(N, M)$ for any integer $i$.
\end{proof}

When $(R, \frak m)$ is a local ring, we put $\mu^i(M):=\mu^i(\frak
m, M)$. The following result is a generalization of the main
results of \cite{dy}.

 \begin{corollary}\label{13}
If $(R, \frak m)$ is a local ring, then
 \begin{itemize}
\item[(i)] $$\mu^t(M)\leq\sum_{i=0}^{t}\mu^{t-i}(
H^{i}_{I,J}(M)).$$

\item[(ii)] $$\mu^s(H^{t}_{I,J}(M))\leq
\sum_{i=0}^{t-1}\mu^{s+t+1-i}(H^{i}_{I,J}(M))+\mu^{s+t}(M)+\sum_{i=t+1}^{s+t-1}\mu^{s+t-1-i}(H^{i}_{I,J}(M)).$$
\end{itemize}
\end{corollary}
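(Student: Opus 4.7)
The plan is to apply Theorem~\ref{11} with $N = R/\mathfrak{m}$ and read the resulting inequalities as statements about vector-space dimensions, which by definition are the Bass numbers.

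First I would check that $R/\mathfrak{m}$ is an $(I,J)$-torsion $R$-module, so that Theorem~\ref{11} is in fact applicable. In the local ring $(R,\mathfrak{m})$ both $I$ and $J$ are contained in $\mathfrak{m}$ (the cases $I=R$ or $J=R$ being trivial), hence $I\subseteq J+\mathfrak{m}$, so $\mathfrak{m}\in\tilde{{\rm W}}(I,J)$. Since $\mathfrak{m}$ annihilates $R/\mathfrak{m}$, we have $R/\mathfrak{m}=\Gamma_{I,J}(R/\mathfrak{m})$.

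Next, I would note that for every $R$-module $X$, the group $\Ext^{j}_{R}(R/\mathfrak{m},X)$ is annihilated by $\mathfrak{m}$ and is therefore a vector space over $k=R/\mathfrak{m}$, whose $k$-dimension (equivalently, its length as an $R$-module) is $\mu^{j}(X)$. Hence every Ext term appearing when we specialize Theorem~\ref{11} to $N=R/\mathfrak{m}$ is a $k$-vector space, and the quantity $\dim_{R}\Ext^{j}_{R}(R/\mathfrak{m},X)$ on either side of the theorem's inequalities can be interpreted as $\mu^{j}(X)$ for the appropriate $X$.

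Finally, I would revisit the proof of Theorem~\ref{11} to see that it is purely formal: it draws all its inequalities from the subadditivity $\dim_{R}B\leq\dim_{R}A+\dim_{R}C$ on short exact sequences $0\to A\to B\to C\to 0$ and from the fact that a subquotient has no larger dimension. Both properties hold even more sharply for $\dim_{k}$ of $k$-vector spaces (where subadditivity becomes additivity). Running the identical chain of inequalities, applied to the Grothendieck spectral sequence $E_{2}^{p,q}=\Ext^{p}_{R}(R/\mathfrak{m},H^{q}_{I,J}(M))\Rightarrow\Ext^{p+q}_{R}(R/\mathfrak{m},M)$ and its convergence filtration, with $\dim_{k}$ in place of $\dim_{R}$ gives exactly (i) and (ii).

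The only subtle point is the compatibility of the two dimension conventions: Theorem~\ref{11} is stated for a generic module $N$, while here we need to read it through the $k$-vector space structure induced by $N=R/\mathfrak{m}$. Once one recognizes that the proof of Theorem~\ref{11} uses only subadditivity on short exact sequences and passage to subquotients, the corollary is immediate.
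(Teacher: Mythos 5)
Your proof matches the paper's, which simply says: put $N=R/\mathfrak{m}$ in Theorem~\ref{11}. Your additional care about reading $\dim_R$ as $k$-vector-space dimension, so that $\dim_R\Ext^j_R(R/\mathfrak{m},X)=\mu^j(X)$, is well placed---with the literal Krull-dimension reading the specialization would be vacuous, every $\Ext^j_R(R/\mathfrak{m},-)$ being a $k$-vector space and hence of Krull dimension zero---and your observation that the spectral-sequence argument in Theorem~\ref{11} uses only subadditivity on short exact sequences and monotonicity under passage to subquotients, both of which hold for $\dim_k$, is exactly what is needed to justify reading the theorem as a statement about Bass numbers.
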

\begin{proof}
In \ref{11}, put $N=R/{\frak m}$.
\end{proof}

\begin{corollary}\label{14}
Let $(R, \frak m)$ be a local ring. Let
$\mu^{s+t-i}(H^{i}_{I,J}(M))=0$ for  all $i\neq t$ with $i\leq
s+t$, $\mu^{s+t+1-i}(H^{i}_{I,J}(M))=0$ for all $i<t$, and
$\mu^{s+t-1-i}(H^{i}_{I,J}(M))=0$ for all $t<i<s+t$. Then
$\mu^{s}(H^{t}_{I,J}(M))=\mu^{s+t}(M)$.
\end{corollary}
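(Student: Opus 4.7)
The plan is to derive the equality by combining the two inequalities from Corollary \ref{13}, which is already available. The hypotheses have been engineered precisely so that each inequality reduces to a comparison between the two quantities $\mu^{s}(H^{t}_{I,J}(M))$ and $\mu^{s+t}(M)$, and the two resulting reverse inequalities force equality. (In spirit this is the local/mod-$\mathfrak m$ analogue of Corollary \ref{12}, which handled the $\dim_R\Ext$ version under the same shape of vanishing hypotheses.)

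First I would apply part (i) of Corollary \ref{13}, but with its index $t$ replaced by $s+t$; this yields
\[
\mu^{s+t}(M)\leq\sum_{i=0}^{s+t}\mu^{s+t-i}\bigl(H^{i}_{I,J}(M)\bigr).
\]
The first hypothesis $\mu^{s+t-i}(H^{i}_{I,J}(M))=0$ for every $i\leq s+t$ with $i\neq t$ kills every summand on the right except the one at $i=t$, which is exactly $\mu^{s}(H^{t}_{I,J}(M))$. So one obtains $\mu^{s+t}(M)\leq\mu^{s}(H^{t}_{I,J}(M))$.

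Next I would apply part (ii) of Corollary \ref{13} directly,
\[
\mu^s(H^{t}_{I,J}(M))\leq \sum_{i=0}^{t-1}\mu^{s+t+1-i}(H^{i}_{I,J}(M))+\mu^{s+t}(M)+\sum_{i=t+1}^{s+t-1}\mu^{s+t-1-i}(H^{i}_{I,J}(M)).
\]
The second hypothesis forces every term in the first sum to vanish, and the third hypothesis forces every term in the second sum to vanish. What remains is $\mu^{s}(H^{t}_{I,J}(M))\leq\mu^{s+t}(M)$. Combining this with the inequality from the previous paragraph yields the desired equality $\mu^{s}(H^{t}_{I,J}(M))=\mu^{s+t}(M)$.

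There is no real obstacle: the content has been absorbed into Theorem \ref{11} and Corollary \ref{13}. The only thing to verify carefully is bookkeeping, namely that the three vanishing hypotheses match exactly the three groups of terms appearing on the right-hand sides of (i) (applied at index $s+t$) and (ii); in particular that the single surviving summand of the first inequality is $\mu^{s}(H^{t}_{I,J}(M))$ (corresponding to $i=t$ in part (i)) and that the indexing ranges $i<t$, $t<i<s+t$ used in the hypotheses line up with the shift $s+t+1-i$ and $s+t-1-i$ appearing in part (ii).
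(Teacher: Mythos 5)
Your argument is correct and is exactly the intended route: Corollary \ref{14} (like Corollary \ref{12}) is stated without proof in the paper because it follows from the two inequalities of Corollary \ref{13} in precisely the way you describe -- applying part (i) at index $s+t$ so that the first vanishing hypothesis leaves only the $i=t$ summand, and applying part (ii) so that the second and third vanishing hypotheses annihilate the two sums -- and the bookkeeping you flag does indeed check out. No gap.
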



\begin{corollary}\label{0001}
Let $(R, \frak m)$ be a local ring and
$I=(a_1,a_2,\ldots,a_t)$. Then
$$\mu^s(H^{t}_{I,J}(M))\leq
\sum_{i=0}^{t-1}\mu^{s+t+1-i}(H^{i}_{I,J}(M))+\mu^{s+t}(M).$$
\end{corollary}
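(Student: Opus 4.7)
My plan is to reduce the statement directly to the finer bound of Corollary~\ref{13}(ii): the inequality to be established differs from it only by the absence of the tail sum $\sum_{i=t+1}^{s+t-1}\mu^{s+t-1-i}(H^{i}_{I,J}(M))$, so it will be enough to show that, under the hypothesis $I=(a_1,\ldots,a_t)$, every summand in this tail is zero.

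To do this I would invoke the vanishing
$$H^{i}_{I,J}(M)=0 \quad \text{for all } i>t,$$
which is the pair-of-ideals analogue of the classical bound of local cohomology by the arithmetic rank. This vanishing is established in \cite{ty} by means of a Cech-type complex of length $t$ built from the generators $a_1,\ldots,a_t$ of $I$, whose cohomology computes $H^{i}_{I,J}(M)$. Granted this, for every $i$ with $t+1 \le i \le s+t-1$ the module $H^{i}_{I,J}(M)$ vanishes, hence so does the Bass number $\mu^{s+t-1-i}(H^{i}_{I,J}(M))$; substituting these vanishings into Corollary~\ref{13}(ii) then yields precisely the desired inequality.

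The main obstacle is not computational but bibliographic: one must locate and cite the correct vanishing result from \cite{ty}. As an alternative that avoids an external reference, one could return to the Grothendieck spectral sequence used in the proof of Theorem~\ref{11} and simply observe that $E_2^{p,q}=0$ for $q>t$; the filtration argument carried out in part (ii) would then truncate at exactly the columns needed to eliminate the tail sum, producing a self-contained derivation of the corollary.
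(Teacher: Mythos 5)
Your proposal is correct and matches the paper's own proof: the paper also derives the inequality from Corollary~\ref{13}(ii) together with the vanishing $H^{i}_{I,J}(M)=0$ for $i>t$ when $I$ is generated by $t$ elements, citing precisely \cite[Proposition 4.11]{ty} for that vanishing. Your alternative remark about truncating the spectral sequence directly is a valid self-contained variant but is not needed, since the cited vanishing is exactly what the paper invokes.
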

\begin{proof}
The claim follows by \ref{13}(ii) and \cite[Proposition 4.11]{ty}.
\end{proof}

\begin{corollary}\label{123}
Let $(R, \frak m)$ be a local ring and $I$  a principal ideal of $R$. Let
$M$ be a minimax $R$-module. Then $\mu^j(H^{i}_{I,J}(M))$
is finite for all $i\in\Bbb N_0$ and all $j\in\Bbb N_0$.
\end{corollary}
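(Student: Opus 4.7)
The plan is to combine two inputs: the vanishing of $H^i_{I,J}(M)$ in high degrees forced by $I$ being principal, and Corollary \ref{0001}, together with the standard fact that minimax modules over a Noetherian local ring have finite Bass numbers. Since $I$ is generated by one element, \cite[Proposition 4.11]{ty} (the same vanishing that drives the proof of \ref{0001}) gives $H^i_{I,J}(M) = 0$ for every $i \geq 2$, hence $\mu^j(H^i_{I,J}(M)) = 0$ in that range. It therefore suffices to treat $i = 0$ and $i = 1$.

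For $i = 0$, I would observe that $H^0_{I,J}(M) = \Gamma_{I,J}(M)$ is a submodule of $M$, and that the class of minimax $R$-modules is closed under passage to submodules. Indeed, if $0 \to M_0 \to M \to A \to 0$ presents $M$ as minimax with $M_0$ finitely generated and $A$ Artinian, then for any submodule $N \subseteq M$, the intersection $N \cap M_0$ is finitely generated over the Noetherian ring $R$ and $N/(N \cap M_0)$ embeds into $A$, so it is Artinian. Hence $\Gamma_{I,J}(M)$ is minimax.

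The key auxiliary statement I would prove is that every minimax $R$-module $L$ has finite Bass numbers. Given a short exact sequence $0 \to L_0 \to L \to B \to 0$ with $L_0$ finitely generated and $B$ Artinian, the long exact Ext sequence reduces this to bounding $\dim_k \Ext_R^j(k, L_0)$ and $\dim_k \Ext_R^j(k, B)$. The first is finite since $L_0$ is finitely generated (standard). For the second, $\Ext_R^j(k, B)$ is $\frak m$-torsion (so a $k$-vector space) and simultaneously Artinian as an $R$-module (as $k$ is finitely generated and $B$ is Artinian), hence finite dimensional over $k$. Applied to $L = M$ and $L = \Gamma_{I,J}(M)$, this yields finiteness of $\mu^j(M)$ and $\mu^j(H^0_{I,J}(M))$ for all $j$.

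Finally, for $i = 1$ I would invoke \ref{0001} with $t = 1$, which gives
$$\mu^s(H^{1}_{I,J}(M)) \leq \mu^{s+2}(H^{0}_{I,J}(M)) + \mu^{s+1}(M),$$
and both summands are finite by the previous step. The main obstacle in this proof plan is the auxiliary lemma that minimax modules have finite Bass numbers — the remaining pieces are a transparent case analysis on $i$ driven by the vanishing afforded by $I$ principal.
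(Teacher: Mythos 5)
Your proof is correct and takes essentially the same route as the paper: use \cite[Proposition 4.11]{ty} to kill $H^i_{I,J}(M)$ for $i \geq 2$, apply Corollary~\ref{0001} with $t=1$ to reduce the case $i=1$ to $\mu^{s+2}(H^0_{I,J}(M))$ and $\mu^{s+1}(M)$, and finish via the fact that minimax modules have finite Bass numbers. You go beyond the paper's one-line proof in two useful ways that the paper leaves implicit: you explicitly observe that $H^0_{I,J}(M)=\Gamma_{I,J}(M)$ is a submodule of $M$ and that minimaxness passes to submodules (which is needed before the "minimax modules have finite Bass numbers" fact can be applied to the $i=0$ term in the inequality), and you supply a clean proof of that fact itself via the long exact Ext sequence and the observation that $\Ext_R^j(k,B)$ is simultaneously a $k$-vector space and Artinian. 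These filled-in details are correct and make the argument self-contained where the paper only gestures.
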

\begin{proof}
Since $I$ is principal, it follows by \cite[Proposition 4.11]{ty} that
$H^{i}_{I,J}(M)=0$ for all $i>1$. Therefore
$\mu^s(H^{1}_{I,J}(M))\leq
\mu^{s+2}(H^{0}_{I,J}(M))+\mu^{s+1}(M)$, by \ref{0001}.
Now the claim follows by this fact that any minimax module has finite Bass numbers.
\end{proof}

\begin{proposition}\label{17}
Let $N$ be an  $(I,J)$-torsion $R$-module. Then the following are
true for  all $\frak p\in \Spec(R)$ and all $j\in\Bbb N_0$:
 \begin{itemize}
\item[(i)] $$\mu^j(\frak p, \Ext_R^t(N,
M))\leq\sum_{i=0}^{t}\mu^j(\frak p, \Ext_R^{t-i}(N,
H^{i}_{I,J}(M))).$$

\item[(ii)]
\begin{eqnarray*} \mu^j(\frak p, \Ext_R^{s}(N,
H^{t}_{I,J}(M)))&\leq& \sum_{i=0}^{t-1}\mu^j(\frak p,
\Ext_R^{s+t+1-i}(N,
H^{i}_{I,J}(M)))\\
&+&\mu^j(\frak p, \Ext_R^{s+t}(N, M))\\
&+&\sum_{i=t+1}^{s+t-1}\mu^j(\frak p, \Ext_R^{s+t-1-i}(N,
H^{i}_{I,J}(M))).
\end{eqnarray*}
\end{itemize}
\end{proposition}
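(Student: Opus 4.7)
The plan is to run the spectral-sequence argument used in the proof of Theorem~\ref{11}, with the invariant $\mu^j(\mathfrak{p},-)$ replacing $\dim_R(-)$ throughout. I would invoke again the Grothendieck spectral sequence attached to $F=\Hom_R(N,-)$ and $G=\Gamma_{I,J}(-)$,
\[
E_2^{p,q}=\Ext_R^p(N,H^q_{I,J}(M))\Longrightarrow \Ext_R^{p+q}(N,M),
\]
which is valid because $N$ is $(I,J)$-torsion, together with its finite filtration $0=\phi^{p+q+1}H^{p+q}\subseteq\cdots\subseteq\phi^0H^{p+q}=\Ext_R^{p+q}(N,M)$ whose successive quotients are the $E_\infty^{p+q-i,i}$.

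The property of $\mu^j(\mathfrak{p},-)$ that plays the role of subquotient-monotonicity of $\dim_R$ is subadditivity on short exact sequences: for any $0\to A\to B\to C\to 0$, the long exact sequence of the functor $\Ext_{R_\mathfrak{p}}^{\bullet}(k(\mathfrak{p}),(-)_\mathfrak{p})$ yields $\mu^j(\mathfrak{p},B)\le\mu^j(\mathfrak{p},A)+\mu^j(\mathfrak{p},C)$. Feeding this into the filtration sequences $0\to\phi^{t+1-i}H^t\to\phi^{t-i}H^t\to E_\infty^{t-i,i}\to 0$ and telescoping exactly as in the proof of Theorem~\ref{11}(i) gives
\[
\mu^j(\mathfrak{p},\Ext_R^t(N,M))\le\sum_{i=0}^{t}\mu^j(\mathfrak{p},E_\infty^{t-i,i}),
\]
while the longer telescoping of Theorem~\ref{11}(ii), applied to the two short exact sequences
\[
0\to\Ker d_r^{s,t}\to E_r^{s,t}\to\Im d_r^{s,t}\to 0,\qquad 0\to\Im d_r^{s-r,t+r-1}\to\Ker d_r^{s,t}\to E_{r+1}^{s,t}\to 0
\]
at each page, produces the analogous bound on $\mu^j(\mathfrak{p},E_2^{s,t})$ that appears in the display for Theorem~\ref{11}(ii).

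The main obstacle is the final passage from $E_\infty^{p,q}$, and more generally from the higher-page terms and the images of the differentials, back to $E_2^{p,q}$. Subquotients preserve Krull dimension, so this step was automatic in Theorem~\ref{11}; it genuinely fails for Bass numbers in general, so extra care is required. I would attack it by induction on the page number $r$: at each step the two short exact sequences above, together with the subadditivity from the preceding paragraph, let one control $\mu^j(\mathfrak{p},E_{r+1}^{p,q})$ in terms of $\mu^j(\mathfrak{p},E_r^{p,q})$ and Bass numbers of the images of $d_r$, and the images themselves sit as submodules of $E_r$-terms, closing the induction. Substituting these $E_2$-level bounds into the telescoped estimates obtained above yields both (i) and (ii).
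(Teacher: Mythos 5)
You have correctly reproduced the spectral-sequence setup that the paper's one-line proof (``similar to that of Theorem~\ref{11}'') is pointing to, and you have correctly put your finger on the place where the argument for $\dim_R$ does not transfer to $\mu^j(\frak p,-)$: Krull dimension is monotone under subquotients, whereas Bass numbers are not. That observation is right, and it is exactly the step on which part (i) of Theorem~\ref{11} leans when it passes from $E_\infty^{t-i,i}$ to $E_2^{t-i,i}$, and on which part (ii) leans when it bounds $\dim_R\Im d_r$ by the $\dim_R$ of the $E_r$-term containing it.

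However, the repair you sketch does not close the gap. The two short exact sequences
$0\to\Ker d_r^{p,q}\to E_r^{p,q}\to\Im d_r^{p,q}\to 0$ and
$0\to\Im d_r^{p-r,q+r-1}\to\Ker d_r^{p,q}\to E_{r+1}^{p,q}\to 0$,
read through the long exact sequence of $\Ext_{R_\frak p}^{\bullet}(k(\frak p),(-)_\frak p)$, give
$$\mu^j(\frak p,E_{r+1}^{p,q})\ \leq\ \mu^j(\frak p,\Ker d_r^{p,q})+\mu^{j+1}(\frak p,\Im d_r^{p-r,q+r-1}),$$
$$\mu^j(\frak p,\Ker d_r^{p,q})\ \leq\ \mu^{j}(\frak p,E_r^{p,q})+\mu^{j-1}(\frak p,\Im d_r^{p,q}),$$
so every page of the induction introduces Bass numbers in degrees $j\pm1$, and these shifts accumulate; you cannot land back on the single fixed degree $j$ that appears in the statement. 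Worse, the sentence ``the images themselves sit as submodules of $E_r$-terms, closing the induction'' relies precisely on monotonicity of $\mu^j(\frak p,-)$ under submodules, which fails for $j\geq1$ (take $R=k[[x]]$ and $k\hookrightarrow E(k)$: then $\mu^1(\frak m,k)=1$ while $\mu^1(\frak m,E(k))=0$). In other words, the very defect you diagnosed at the start is reintroduced by the proposed fix. The subadditivity of $\mu^j$ on short exact sequences is genuinely what you need at the filtration stage, and your use of it there is correct; but the passage from $E_\infty$ (and from the images of the differentials) back to $E_2$ needs a different mechanism than submodule/subquotient comparison, and the proposal does not supply one.
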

\begin{proof}
The proof is similar to that of \ref{11}.
\end{proof}

\section{Some isomorphisms }

In this section, we get some isomorphisms and equalities about the
extension functors and the Bass numbers of local cohomology modules, respectively. The following result is a
generalization of \cite[Theorem 3.5]{atv}.

\begin{theorem}\label{25}
Let $N$ be an  $(I,J)$-torsion $R$-module. Let $\Ext_R^{s+t-i}(N,
H^{i}_{I,J}(M))=0$ for all $i\neq t$ with $i\leq s+t$,
$\Ext_R^{s+t+1-i}(N, H^{i}_{I,J}(M))=0$ for all $i<t$,  and let
$\Ext_R^{s+t-1-i}(N, H^{i}_{I,J}(M))=0$ for all $t<i<s+t$. Then
$$\Ext_R^{s}(N, H^{t}_{I,J}(M))\cong\Ext_R^{s+t}(N, M).$$
\end{theorem}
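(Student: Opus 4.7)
The plan is to refine the dimension bound of Corollary \ref{12} into an honest isomorphism by showing that, under the stated hypotheses, the Grothendieck spectral sequence already used in the proof of Theorem \ref{11} degenerates along the diagonal $p+q=s+t$. Concretely, I would invoke the same spectral sequence
\[
E_2^{p,q} = \Ext_R^p\bigl(N, H^q_{I,J}(M)\bigr) \Rightarrow \Ext_R^{p+q}(N, M)
\]
together with the finite filtration $0 = \phi^{s+t+1}H^{s+t}\subseteq\cdots\subseteq\phi^0H^{s+t}=\Ext_R^{s+t}(N,M)$ whose graded pieces are $E_\infty^{s+t-i,i}\cong \phi^{s+t-i}H^{s+t}/\phi^{s+t+1-i}H^{s+t}$.

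The first step is to verify that $E_2^{s,t}\cong E_\infty^{s,t}$ by checking that every differential touching position $(s,t)$ on pages $r\geq 2$ vanishes. The outgoing differential $d_r^{s,t}\colon E_r^{s,t}\to E_r^{s+r,\,t-r+1}$ has target a subquotient of $E_2^{s+r,\,t-r+1}$; setting $i=t-r+1<t$, this equals $\Ext_R^{s+t+1-i}(N,H^i_{I,J}(M))$, which vanishes by the second hypothesis (for $r>t+1$ it vanishes trivially). The incoming differential $d_r^{s-r,\,t+r-1}\colon E_r^{s-r,\,t+r-1}\to E_r^{s,t}$ has source a subquotient of $E_2^{s-r,\,t+r-1}$; setting $i=t+r-1$, which satisfies $t<i<s+t$ whenever $2\leq r\leq s$, this equals $\Ext_R^{s+t-1-i}(N,H^i_{I,J}(M))$, killed by the third hypothesis (for $r>s$ the source vanishes trivially). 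Hence no higher differential acts nontrivially on $E_r^{s,t}$, giving $E_2^{s,t}\cong E_\infty^{s,t}$.

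The second step is to show that every other graded piece of the filtration is zero. For each $i\neq t$ with $0\leq i\leq s+t$, the term $E_\infty^{s+t-i,i}$ is a subquotient of $E_2^{s+t-i,i}=\Ext_R^{s+t-i}(N,H^i_{I,J}(M))$, which vanishes by the first hypothesis. Thus $\phi^{s+t-i}H^{s+t}=\phi^{s+t+1-i}H^{s+t}$ for every such $i$; telescoping upwards from $\phi^{s+t+1}H^{s+t}=0$ through $i=0,1,\ldots,t-1$ forces $\phi^{s+1}H^{s+t}=0$, while telescoping downwards through $i=t+1,\ldots,s+t$ yields $\phi^0 H^{s+t}=\phi^s H^{s+t}$. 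Combining these observations,
\[
\Ext_R^{s+t}(N,M)=\phi^0 H^{s+t}=\phi^s H^{s+t}\cong E_\infty^{s,t}\cong E_2^{s,t}=\Ext_R^s\bigl(N,H^t_{I,J}(M)\bigr).
\]

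No serious obstacle is expected beyond the careful index bookkeeping between the $(p,q)$ coordinates of the spectral sequence and the $(s+t-i,i)$ parametrization governing both the vanishing hypotheses and the filtration; the homological input is precisely that already underlying Theorem \ref{11}, and the only extra ingredient is the exactness (rather than merely dimension-preserving) behavior of subquotient extraction in a filtration whose associated graded is concentrated in a single degree.
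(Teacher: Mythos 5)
Your proposal is correct and follows essentially the same route as the paper: both invoke the Grothendieck spectral sequence $E_2^{p,q}=\Ext_R^p(N,H^q_{I,J}(M))\Rightarrow\Ext_R^{p+q}(N,M)$, use the first vanishing hypothesis to collapse the filtration of $\Ext_R^{s+t}(N,M)$ onto the single graded piece $E_\infty^{s,t}$, and use the second and third hypotheses to kill all differentials into and out of position $(s,t)$ so that $E_2^{s,t}\cong E_\infty^{s,t}$. The only differences are cosmetic — you argue degeneration before filtration collapse rather than after, and you track differentials with $(p,q)$-coordinates directly instead of the paper's $(s+t-i,i)$ reparametrization.
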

\begin{proof}
Let
$F(-)=\Hom_R(N,-)$ and $G(-)=\Gamma_{I,J}(-)$. Then we have
 $FG(M)=
\Hom_R(N, M)$. By \cite[Theorem 11.38]{r}, there is the
Grothendieck  spectral sequence
$$E_2^{p,q}:=\Ext_R^{p}(N, H^{q}_{I,J}(M))\Rightarrow \Ext_R^{p+q}(N,
M).$$  There is a finite filtration
$$0=\phi^{t+1}H^{t}\subseteq\phi^{t}H^{t}\subseteq\cdots
\subseteq\phi^{1}H^{t}\subseteq\phi^{0}H^{t}=\Ext_R^{t}(N,M)$$
such that $E_\infty^{t-i,i}\cong
\phi^{t-i}H^{t}/\phi^{t+1-i}H^{t}$ for all $i\leq t$.
We have to show that $\phi^0H^{s+t}\cong E_2^{s,t}$. Our
hypothesis imply that $E_2^{s+t-i,i}=0$ for all $i\neq t$ with
$i\leq s+t$. So $E_{\infty}^{s+t-i,i}=0$ for all $i\neq t$ with
$i\leq s+t$.  The  sequence
$$0\longrightarrow\phi^{s+t+1-i}H^{s+t}\longrightarrow\phi^{s+t-i}H^{s+t}\longrightarrow
E_\infty^{s+t-i,i}\longrightarrow 0$$ is exact for any integer $i$. It follows that
$\phi^{s}H^{s+t}\cong E_{\infty}^{s,t}$ and $\phi^{s}H^{s+t}\cong
\phi^{0}H^{s+t}$, and so that $\phi^{0}H^{s+t}\cong
E_{\infty}^{s,t}$. Therefore it is enough to show that
$E_{\infty}^{s,t}\cong E_2^{s,t}$. Our hypothesis imply that
$E_{t+1-i}^{s+t+1-i,i}=0$ for all $i<t$, and
$E_{1-t+i}^{s+t-1-i,i}=0$ for all $t<i<s+t$. So
$E_{t+1-i}^{s-t-1+i,2t-i}=0$ for all $t-s<i<t$. Note that if
$i\leq t-s$, then $E_{t+1-i}^{s-t-1+i,2t-i}=0$. Therefore
$E_{t+1-i}^{s-t-1+i,2t-i}=0$ for all $i<t$, and so that $\Im
d_{t+1-i}^{s-t-1+i,2t-i}=0$ for all $i<t$. The
 sequences
$$0\longrightarrow \Ker d_{t+1-i}^{s,t}\longrightarrow E_{t+1-i}^{s,t}\stackrel{d_{t+1-i}^{s,t}}\longrightarrow
E_{t+1-i}^{s+t+1-i,i}$$ and
$$0\longrightarrow \Im d_{t+1-i}^{s-t-1+i,2t-i}\longrightarrow
\Ker d_{t+1-i}^{s,t}\longrightarrow
E_{t+2-i}^{s,t}\longrightarrow0$$
are exact for any integer $i$. It follows that  $E_2^{s,t}\cong
E_{s+t+2}^{s,t}=E_{\infty}^{s,t}$, and the claim follows.
\end{proof}

\begin{corollary}\label{26}
Let $\frak{p}\in {\rm W}(I,J)$. Let
$\Ext_R^{s+t-i}(R/\frak{p}, H^{i}_{I,J}(M))=0$ for all $i\neq t$ with
 $i\leq s+t$, $\Ext_R^{s+t+1-i}(R/\frak{p},
H^{i}_{I,J}(M))=0$ for all $i<t$, and
$\Ext_R^{s+t-1-i}(R/\frak{p}, H^{i}_{I,J}(M))=0$ for all
$t<i<s+t$. Then $\mu^{s}(\frak{p},
H^{t}_{I,J}(M))=\mu^{s+t}(\frak{p}, M)$.
\end{corollary}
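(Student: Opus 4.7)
The plan is to deduce this directly from Theorem \ref{25} by taking $N = R/\mathfrak{p}$ and then translating the resulting isomorphism of $R$-modules into an equality of Bass numbers via localization at $\mathfrak{p}$.

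First I would verify that $R/\mathfrak{p}$ is an $(I,J)$-torsion module, so that Theorem \ref{25} applies. Since $\mathfrak{p}\in {\rm W}(I,J)$, there is a positive integer $n$ with $I^n\subseteq J+\mathfrak{p}$. For every $\mathfrak{q}\in V(\mathfrak{p})$ we have $\mathfrak{p}\subseteq \mathfrak{q}$, hence $I^n\subseteq J+\mathfrak{q}$, so $\mathfrak{q}\in {\rm W}(I,J)$. Thus $\Supp_R R/\mathfrak{p}=V(\mathfrak{p})\subseteq {\rm W}(I,J)$, which shows that $R/\mathfrak{p}=\Gamma_{I,J}(R/\mathfrak{p})$.

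Next, I would apply Theorem \ref{25} with $N=R/\mathfrak{p}$. The three vanishing hypotheses in the statement of the corollary are exactly those of Theorem \ref{25}, so we obtain an isomorphism of $R$-modules
\[
\Ext_R^{s}(R/\mathfrak{p},H^{t}_{I,J}(M))\;\iso\;\Ext_R^{s+t}(R/\mathfrak{p},M).
\]

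Finally, I would pass to Bass numbers by localizing at $\mathfrak{p}$. Because $R/\mathfrak{p}$ is finitely generated, localization commutes with Ext, giving, for any $R$-module $X$, the standard identification
\[
\Ext_R^{j}(R/\mathfrak{p},X)_{\mathfrak{p}}\;\iso\;\Ext_{R_{\mathfrak{p}}}^{j}(k(\mathfrak{p}),X_{\mathfrak{p}}),
\]
which is an $R_{\mathfrak{p}}$-module annihilated by $\mathfrak{p}R_{\mathfrak{p}}$ and hence a $k(\mathfrak{p})$-vector space. Localizing the isomorphism from Theorem \ref{25} at $\mathfrak{p}$ and computing $k(\mathfrak{p})$-dimensions on both sides yields
\[
\mu^{s}(\mathfrak{p},H^{t}_{I,J}(M))=\mu^{s+t}(\mathfrak{p},M),
\]
which is the desired equality. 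There is no real obstacle here; the work was already done in Theorem \ref{25}, and the only item to check is the torsion hypothesis on $R/\mathfrak{p}$, which is immediate from the definition of ${\rm W}(I,J)$.
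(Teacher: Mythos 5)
Your proposal is correct and matches the paper's argument: both apply Theorem~\ref{25} with $N=R/\frak{p}$ (noting $R/\frak{p}$ is $(I,J)$-torsion since $\frak{p}\in{\rm W}(I,J)$) and then pass to Bass numbers by localizing at $\frak{p}$, using that Ext commutes with localization for a finitely generated first argument over a Noetherian ring. The paper's proof is terser but follows the same route.
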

\begin{proof}
We note that
$\Ext_{R_\frak{p}}^{s}(R_\frak{p}/{\frak{p}R_\frak{p}},
H^{t}_{I,J}(M)_\frak{p})\cong\Ext_{R_\frak{p}}^{s+t}(R_\frak{p}/{\frak{p}R_\frak{p}},
M_\frak{p})$, by \ref{25}.
\end{proof}

\begin{corollary}\label{27}
Suppose that $N$ is a finitely generated $\frak{a}$-torsion
$R$-module for some $\frak{a}\in \tilde{{\rm W}}(I,J)$. Suppose that
 $\Ext_R^{s+t-i}(N, H^{i}_{I,J}(M))=0$ for all $i\neq t$ with
 $i\leq s+t$, $\Ext_R^{s+t+1-i}(N, H^{i}_{I,J}(M))=0$ for all
$i<t$,  and $\Ext_R^{s+t-1-i}(N, H^{i}_{I,J}(M))=0$ for all
$t<i<s+t$.
 Then $\Ext_R^{s}(N, H^{t}_{I,J}(M))\cong\Ext_R^{s+t}(N, M)\cong H^{s+t}_{\frak{a}}(N,M)$.
\end{corollary}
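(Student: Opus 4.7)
The plan is to deduce the two claimed isomorphisms by combining Theorem \ref{25} with the Ext-to-generalized-local-cohomology identification that was already invoked in the proof of Corollary \ref{15}.

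First I would verify that $N$ is $(I,J)$-torsion. Since $\frak{a}\in \tilde{{\rm W}}(I,J)$, any ideal containing a power of $\frak{a}$ also lies in $\tilde{{\rm W}}(I,J)$, which immediately gives $\Gamma_{\frak{a}}(N)\subseteq\Gamma_{I,J}(N)$, exactly as noted in the proof of Corollary \ref{15}. Therefore an $\frak{a}$-torsion $R$-module is automatically $(I,J)$-torsion, and the three vanishing hypotheses assumed in Corollary \ref{27} are precisely those required by Theorem \ref{25}. Applying Theorem \ref{25} with this $N$ and $M$ then yields the first isomorphism
$$\Ext_R^{s}(N, H^{t}_{I,J}(M))\cong\Ext_R^{s+t}(N, M).$$

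For the second isomorphism, I would appeal to \cite[Lemma 2.1]{dst}, the same tool used in the proof of Corollary \ref{15}: when $N$ is a finitely generated $\frak{a}$-torsion $R$-module, one has $\Ext_R^{i}(N, M)\cong H^i_{\frak{a}}(N, M)$ for every $i$. Specialising to $i=s+t$ produces $\Ext_R^{s+t}(N, M)\cong H^{s+t}_{\frak{a}}(N,M)$, and concatenating with the first isomorphism completes the proof.

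I do not anticipate a genuine obstacle: the argument is essentially a two-step citation, with Theorem \ref{25} supplying the spectral-sequence collapse and \cite[Lemma 2.1]{dst} rewriting the ordinary Ext as generalized local cohomology with support in $\frak{a}$. The only point requiring any care is the verification that the $(I,J)$-torsion hypothesis of Theorem \ref{25} is implied by the $\frak{a}$-torsion hypothesis on $N$; once the containment $\Gamma_{\frak{a}}\subseteq\Gamma_{I,J}$ is recorded, both isomorphisms fall out immediately.
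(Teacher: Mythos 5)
Your proposal is correct and follows exactly the paper's own argument: the paper's proof simply cites Theorem \ref{25} together with \cite[Lemma 2.1]{dst}, which are precisely the two steps you carry out. The only addition is that you explicitly record the containment $\Gamma_{\frak{a}}(N)\subseteq\Gamma_{I,J}(N)$ needed to invoke Theorem \ref{25}, which is a reasonable bit of bookkeeping the paper leaves implicit.
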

\begin{proof}
The result follows by  \ref{25} and  \cite[Lemma 2.1]{dst}.
\end{proof}

\begin{corollary}\label{28}
Suppose that $N$ is a finitely generated $\frak{a}$-torsion
$R$-module for some $\frak{a}\in \tilde{{\rm W}}(I,J)$. Suppose that
 $\Ext_R^{j-i}(N, H^{i}_{I,J}(M))=0$ for $j=t, t+1$ and all
$i<t$. Then $\Hom_R(N, H^{t}_{I,J}(M))\cong\Ext_R^t(N,
M)\cong H^{t}_{\frak{a}}(N,M)$.
\end{corollary}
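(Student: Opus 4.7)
The plan is to recognize Corollary~\ref{28} as the $s=0$ specialization of Corollary~\ref{27}. Setting $s=0$ in the three vanishing hypotheses of Corollary~\ref{27}, the first becomes $\Ext_R^{t-i}(N, H^{i}_{I,J}(M))=0$ for all $i<t$, the second becomes $\Ext_R^{t+1-i}(N, H^{i}_{I,J}(M))=0$ for all $i<t$, and the third becomes a condition over the empty range $t<i<t$ and so is vacuous. The first two together are precisely the assumption of the present statement.

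Consequently Corollary~\ref{27} applies at $s=0$ and delivers $\Ext_R^{0}(N, H^{t}_{I,J}(M))\cong \Ext_R^{t}(N, M)\cong H^{t}_{\frak{a}}(N,M)$; identifying $\Ext_R^{0}(N,-)=\Hom_R(N,-)$ then gives the desired chain of isomorphisms.

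For a self-contained derivation one could rerun the Grothendieck spectral sequence $E_2^{p,q}=\Ext_R^{p}(N, H^{q}_{I,J}(M))\Rightarrow \Ext_R^{p+q}(N, M)$ exploited in Theorem~\ref{25}. The first vanishing hypothesis makes $E_2^{p,t-p}=0$ for $0<p\leq t$, collapsing the filtration on $\Ext_R^{t}(N,M)$ to a single quotient isomorphic to $E_\infty^{0,t}$. The second hypothesis forces the targets $E_r^{r,t-r+1}$ of every outgoing differential from $E_r^{0,t}$ to vanish (they lie in total degree $t+1$), while incoming differentials have source in negative $p$-position and so vanish automatically; hence $E_\infty^{0,t}\cong E_2^{0,t}=\Hom_R(N, H^{t}_{I,J}(M))$. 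The isomorphism $\Ext_R^{t}(N,M)\cong H^{t}_{\frak{a}}(N,M)$ then comes from the $\frak{a}$-torsion assumption on $N$ via \cite[Lemma 2.1]{dst}, exactly as in the proof of Corollary~\ref{27}. There is no real obstacle: the whole content is a routine bookkeeping of vanishing indices, the essential observation being that $s=0$ empties the third vanishing range.
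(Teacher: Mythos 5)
Your proposal is correct and matches the paper's proof exactly: the paper also derives Corollary~\ref{28} by setting $s=0$ in Corollary~\ref{27}, and your unwinding of the three vanishing hypotheses (the third becoming vacuous) is precisely the bookkeeping involved. The additional self-contained spectral-sequence sketch is accurate but not needed for the one-line reduction.
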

\begin{proof}
In \ref{27}, put $s=0$.
\end{proof}

\begin{corollary}\label{29}
Let $N$ be a finitely generated  $R$-module with $\Supp_R
N={\rm V}(\frak a)$ for some $\frak{a}\in \tilde{{\rm W}}(I,J)$.
If $\Ext_R^j(N, H^{i}_{I,J}(M))=0$ for all $i<t$ and all $j\leq t+1-i$, then $\Hom_R(N,
H^{t}_{I,J}(M))\cong\Ext_R^{t}(N, M)\cong H^{t}_{\frak{a}}(N,M)\cong\Hom_R(N,
H^{t}_{\frak{a}}(M))$.
\end{corollary}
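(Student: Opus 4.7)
The plan is to derive all four isomorphisms by applying Corollary \ref{28} twice---once for the pair $(I,J)$ and once for the pair $(\frak{a},0)$---after using the hypothesis of Corollary \ref{29} to secure the vanishings required for each application.

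First I would observe that $\Supp_R N = {\rm V}(\frak{a})$ together with $N$ finitely generated forces $\sqrt{\Ann_R N} = \frak{a}$, so $N$ is $\frak{a}$-torsion, and since $\frak{a} \in \tilde{{\rm W}}(I,J)$ it is also $(I,J)$-torsion. Corollary \ref{28} applied with the pair $(I,J)$ then works: the vanishing it demands, namely $\Ext_R^{j-i}(N, H^i_{I,J}(M)) = 0$ for $j \in \{t, t+1\}$ and $i < t$, is subsumed by the hypothesis of Corollary \ref{29} since $j-i \leq t+1-i$ in both cases. This yields the first three isomorphisms $\Hom_R(N, H^t_{I,J}(M)) \cong \Ext_R^t(N, M) \cong H^t_{\frak{a}}(N, M)$.

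For the final isomorphism $H^t_{\frak{a}}(N, M) \cong \Hom_R(N, H^t_{\frak{a}}(M))$ I would apply Corollary \ref{28} a second time, now with the pair $(\frak{a}, 0)$, noting that $H^i_{\frak{a}, 0}(M) = H^i_{\frak{a}}(M)$ and $\frak{a} \in \tilde{{\rm W}}(\frak{a}, 0)$ automatically. Its hypothesis then reduces to the vanishing of $\Ext_R^{j-i}(N, H^i_{\frak{a}}(M))$ for $j \in \{t, t+1\}$ and $i < t$, which holds automatically once one establishes that $H^i_{\frak{a}}(M) = 0$ for every $i < t$. To prove this vanishing of ordinary local cohomology, I would first invoke the Grothendieck spectral sequence $E_2^{p,q} = \Ext_R^p(N, H^q_{I,J}(M)) \Rightarrow \Ext_R^{p+q}(N, M)$ used in the proof of Theorem \ref{11}: the hypothesis of Corollary \ref{29} kills every $E_2^{p,q}$ with $q < t$ and $p \leq t+1-q$, and in particular every term on a diagonal $p + q = k < t$ (where $q \leq k < t$ and $p \leq k < t+1-q$), so $\Ext_R^k(N, M) = 0$ for all $k < t$. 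Since $N$ is finitely generated with $\sqrt{\Ann_R N} = \frak{a}$, the classical grade identity \cite[Proposition 1.2.10]{bh} gives $\grade(\frak{a}, M) = \inf\{k : \Ext_R^k(N, M) \neq 0\} \geq t$, whence $H^i_{\frak{a}}(M) = 0$ for $i < t$, and the second application of Corollary \ref{28} closes the chain.

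The main obstacle is precisely the passage from the hypothesis, which is phrased in terms of $H^i_{I,J}$, to the vanishing of the ordinary local cohomology $H^i_{\frak{a}}(M)$ needed for the second application of Corollary \ref{28}; the argument is forced to detour through the abutment $\Ext_R^k(N, M)$, and it is here that the full strength of the equality $\Supp_R N = {\rm V}(\frak{a})$ (as opposed to mere containment) enters, via the Rees/Bruns--Herzog identification of grade with the least nonvanishing $\Ext$.
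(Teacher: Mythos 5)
Your proof is essentially correct, but it follows a genuinely different route from the paper's. The paper disposes of the last isomorphism $H^t_{\frak a}(N,M)\cong\Hom_R(N,H^t_{\frak a}(M))$ simply by citing \cite[Corollary 2.5]{pl}, a result of the same authors about comparing $(I,J)$-local cohomology with ordinary $\frak a$-local cohomology; together with Corollary \ref{28} this gives the whole chain in one line. You instead reconstruct an argument from scratch: you use the Grothendieck spectral sequence to push the hypothesized vanishings along each anti-diagonal $p+q=k<t$ and conclude $\Ext^k_R(N,M)=0$ for $k<t$; you then convert this to $H^i_{\frak a}(M)=0$ for $i<t$ via the Ext-depth characterization of grade; and finally you apply Corollary \ref{28} a second time with the pair $(\frak a,0)$, whose hypotheses are now vacuously satisfied. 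Both ingredients of the first application (that $N$ is $\frak a$-torsion and hence $(I,J)$-torsion, and that the paper's hypothesis subsumes the vanishing Corollary \ref{28} needs) are correctly verified, and the observation that $\frak a\in\tilde{{\rm W}}(\frak a,0)$ makes the second application legitimate. The only blemish is bibliographical: \cite[Proposition 1.2.10]{bh} is stated for finitely generated $M$, whereas $M$ here is an arbitrary module. The identity you actually need, $\inf\{k:\Ext^k_R(N,M)\neq0\}=\inf\{i:H^i_{\frak a}(M)\neq0\}$ for $N$ finitely generated with $\Supp_RN={\rm V}(\frak a)$ and $M$ arbitrary, does hold (it is the $\Ext$-depth version of Rees' theorem combined with the standard depth--local-cohomology link, see for example Brodmann--Sharp), but a reference covering the non-finitely-generated case should be substituted. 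With that repair your argument is a self-contained alternative to the paper's appeal to \cite{pl}; the paper's version is shorter, yours makes the mechanism visible and avoids the external preprint.
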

\begin{proof}
The result follows by \ref{28} and \cite[Corollary 2.5]{pl}.
\end{proof}


\end{document}